\theoremstyle{plain}
\newtheorem{theorem}{Theorem}[section]
\newtheorem{lemma}[theorem]{Lemma}
\theoremstyle{definition}
\newtheorem{definition}[theorem]{Definition}
\theoremstyle{remark}
\newtheorem{remark}[theorem]{Remark}
\numberwithin{equation}{section}
\begin{document}

\title[Blow-up problem for nonlocal parabolic equation]
{Blow-up problem for nonlinear nonlocal parabolic equation with absorption under  nonlinear nonlocal boundary condition}

\author[A. Gladkov]{Alexander Gladkov}
\address{Alexander Gladkov \\ Department of Mechanics and Mathematics
\\ Belarusian State University \\  4  Nezavisimosti Avenue \\ 220030
Minsk, Belarus and Peoples' Friendship University of Russia (RUDN University), 6 Miklukho-Maklaya street, Moscow 117198, Russian Federation }    \email{gladkoval@bsu.by}

\subjclass[2020]{ 35K20, 35K58, 35K61}
\keywords{Nonlocal parabolic equation; nonlocal boundary condition; blow-up; global existence}

\begin{abstract}
In this paper we consider initial boundary value problem for
 nonlinear nonlocal parabolic equation with absorption under
nonlinear nonlocal boundary condition and nonnegative initial datum.
We prove comparison principle, global existence and blow-up of solutions.

\end{abstract}

\maketitle

\section{Introduction}

In this paper we consider the initial boundary value problem for
nonlinear nonlocal parabolic equation
\begin{equation}\label{v:u}
    u_t= \Delta u + a u^p \int_{\Omega} u^q (y,t) \, dy - b u^m,\;x\in\Omega,\;t>0,
\end{equation}
with nonlinear nonlocal boundary condition
\begin{equation}\label{v:g}
\frac{\partial u(x,t)}{\partial\nu}=
\int_{\Omega}{k(x,y,t)u^l(y,t)}\,dy, \; x\in\partial\Omega, \; t > 0,
\end{equation}
and initial datum
\begin{equation}\label{v:n}
    u(x,0)=u_{0}(x),\; x\in\Omega,
\end{equation}
where $a,\, b,\, p,\,q, \,m,\,l$ are positive numbers, $\Omega$ is a bounded domain in $\mathbb{R}^N$
for $N\geq1$ with smooth boundary $\partial\Omega$, $\nu$ is unit
outward normal on $\partial\Omega.$

Throughout this paper we suppose that the functions
$k(x,y,t)$ and $u_0(x)$ satisfy the following conditions:
\begin{equation*}
k(x, y, t)\in
C(\partial\Omega\times\overline{\Omega}\times[0,+\infty)),\;k(x,y,t)\geq0;
\end{equation*}
\begin{equation*}
u_0(x)\in C^1(\overline{\Omega}),\;u_0(x)\geq0\textrm{ in
}\Omega,\;\frac{\partial u_0(x)}{\partial\nu}=\int_{\Omega}{k(x,
y,0)u_0^l(y)}\,dy\textrm{ on }\partial\Omega.
\end{equation*}
Various phenomena in the natural sciences and engineering lead to the nonclassical mathematical models subject to nonlocal boundary conditions.
For global existence and blow-up of solutions for parabolic equations and systems
with nonlocal boundary conditions we refer to \cite{CL,D,F,GK,GN1,GN2,KT,K,KD,MV,Pao,WMX} and the references therein.
In particular, the blow-up problem for parabolic equations with nonlocal boundary condition
\begin{equation*}
    u(x,t)=\int_{\Omega}k(x,y,t)u^l(y,t)\,dy,\;x\in\partial\Omega,\;t>0,
\end{equation*}
was considered in~\cite{CYZ,FZ2,GG,GG1,Liu}. Initial boundary value problems for parabolic equations with
nonlocal boundary condition (\ref{v:g}) were studied in~\cite{GK1,GK2,LLLW,LWSL,LHZ}.
So, the problem~(\ref{v:u})--(\ref{v:n}) with $a = 0$ was investigated in~\cite{G1,G2}.
Initial-boundary value problems  for nonlocal parabolic equations
with nonlocal boundary conditions were addressed in many papers also
(see, for example, \cite{CY,FZ3,GK4,LL,YF,YX,ZK}). In particular, some global existence and blow-up results for
(\ref{v:u})--(\ref{v:n}) were obtained in \cite{WY}.

The aim of this paper is to investigate global existence and blow-up of solutions of~(\ref{v:u})--(\ref{v:n}).

This paper is organized as follows. In the next section we prove
comparison principle. The global existence of solutions for any initial data is proved in Section 3. In Section 4 we
present finite time blow-up results for solutions with large initial data.

\section{Comparison principle}\label{cp}

In this section a comparison principle
for~(\ref{v:u})--(\ref{v:n}) will be proved. We begin with
definitions of a supersolution, a subsolution and a maximal
solution of~(\ref{v:u})--(\ref{v:n}). Let
$Q_T=\Omega\times(0,T),\;S_T=\partial\Omega\times(0,T)$,
$\Gamma_T=S_T\cup\overline\Omega\times\{0\}$, $T>0$.
\begin{definition}\label{v:sup}
We say that a nonnegative function $u(x,t)\in C^{2,1}(Q_T)\cap
C^{1,0}(Q_T\cup\Gamma_T)$ is a supersolution
of~(\ref{v:u})--(\ref{v:n}) in $Q_{T}$ if
        \begin{equation}\label{v:sup^u}
u_t \geq \Delta u + a u^p \int_{\Omega} u^q (y,t) \, dy - b u^m,\;(x,t)\in Q_T,
        \end{equation}
        \begin{equation}\label{v:sup^g}
\frac{\partial u(x,t)}{\partial\nu}\geq\int_{\Omega}{k(x, y,
t)u^l(y, t) }\,dy, \; x \in \partial \Omega,\; 0 < t < T,
        \end{equation}
        \begin{equation}\label{v:sup^n}
            u(x,0)\geq u_{0}(x),\; x\in\Omega,
        \end{equation}
and $u(x,t)\in C^{2,1}(Q_T)\cap C^{1,0}(Q_T\cup\Gamma_T)$ is a
subsolution of~(\ref{v:u})--(\ref{v:n}) in $Q_{T}$ if $u\geq0$ and
it satisfies~(\ref{v:sup^u})--(\ref{v:sup^n}) in the reverse
order. We say that $u(x,t)$ is a solution of
problem~(\ref{v:u})--(\ref{v:n}) in $Q_T$ if $u(x,t)$ is both a
subsolution and a supersolution of~(\ref{v:u})--(\ref{v:n}) in
$Q_{T}$.
\end{definition}
\begin{definition}\label{v:max1}
We say that a solution $u(x,t)$ of~(\ref{v:u})--(\ref{v:n}) in
$Q_{T}$ is a maximal solution if for any other solution $v(x,t)$
of~(\ref{v:u})--(\ref{v:n}) in $Q_{T}$ the inequality $v(x,t)\leq
u(x,t)$ is satisfied for $(x,t)\in Q_T\cup\Gamma_T$.
\end{definition}

\begin{theorem}\label{Th3} Let $\overline{u}$ and $\underline{u}$ be a
 supersolution and a  subsolution of problem
(\ref{v:u})--(\ref{v:n}) in $Q_T,$ respectively. Suppose that
$\underline{u}(x,t)> 0$ or $\overline{u}(x,t) > 0$ in ${Q}_T\cup
\Gamma_T$ if $\min (p, q, l) < 1.$ Then $ \overline{u}(x,t) \geq
\underline{u}(x,t) $ in ${Q}_T\cup \Gamma_T.$
\end{theorem}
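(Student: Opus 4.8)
The plan is to reduce everything to a linear differential inequality for the difference $w = \overline{u} - \underline{u}$ and then apply a maximum-principle-type argument on a time interval that is a priori short, afterwards bootstrapping to the whole of $[0,T]$. First I would set $w = \overline{u} - \underline{u}$; subtracting the two differential inequalities and using the mean value theorem to linearize each of the nonlinearities $u^p$, $u^q$, $u^m$, $u^l$ (the powers $p,q,m,l$), one gets
\begin{equation*}
w_t \geq \Delta w + c_1(x,t)\, w + c_2(x,t) \int_{\Omega} w(y,t)\, dy, \quad (x,t)\in Q_{T'},
\end{equation*}
\begin{equation*}
\frac{\partial w(x,t)}{\partial \nu} \geq \int_{\Omega} c_3(x,y,t)\, w(y,t)\, dy, \quad x\in\partial\Omega,\; 0 < t < T',
\end{equation*}
\begin{equation*}
w(x,0) \geq 0, \quad x \in \Omega,
\end{equation*}
where the coefficients $c_1, c_2, c_3$ are bounded on $\overline{Q}_{T'}$, and $c_2, c_3 \geq 0$. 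The point of the hypothesis ``$\underline{u} > 0$ or $\overline{u} > 0$ if $\min(p,q,l) < 1$'' is exactly to guarantee that these coefficients stay bounded: when an exponent is $\geq 1$ the factor $(\theta \overline{u} + (1-\theta)\underline{u})^{\,\cdot\, - 1}$ from the mean value theorem is harmless, but when it is $< 1$ one needs the relevant function to be bounded away from $0$ on the (compact) closure of the cylinder, which positivity of one of the two functions (together with continuity) provides on a possibly smaller time slab; I would make this boundedness explicit as the first lemma-like step.

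Next I would run the standard weak maximum principle / positivity argument. The cleanest route is the classical device: fix a small $\tau \in (0,T]$ to be chosen, let $M = \sup_{\overline{Q}_\tau} |c_1| + |\Omega|\sup |c_2| + (\text{boundary contribution from } c_3)$, and consider $z(x,t) = e^{-\lambda t} w(x,t)$ for suitable $\lambda$, or alternatively argue directly by contradiction: if $\min_{\overline{Q}_\tau} w < 0$, it is attained at some $(x_0,t_0)$ with $t_0 > 0$; if $x_0 \in \Omega$ the PDE inequality together with $\Delta w(x_0,t_0)\geq 0$, $w_t(x_0,t_0)\leq 0$ and the sign of the zeroth-order and nonlocal terms gives a contradiction once $\tau$ is small enough (the nonlocal term $c_2 \int_\Omega w$ is controlled because $w \geq \min w$ everywhere and $c_2 \geq 0$); if $x_0 \in \partial\Omega$ one uses Hopf's lemma / the boundary inequality $\partial w/\partial\nu \geq \int_\Omega c_3 w \geq (\text{something}) \cdot \min w$ to again reach a contradiction. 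Either way one concludes $w \geq 0$ on $\overline{Q}_\tau$ for $\tau$ depending only on the bounds of the coefficients, not on the data; hence the argument can be iterated over $[0,\tau], [\tau, 2\tau], \dots$ to cover all of $[0,T]$.

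The main obstacle I anticipate is handling the nonlocal terms — both the integral $\int_\Omega u^q$ in the equation and the boundary integral $\int_\Omega k u^l$ — within a maximum principle framework, since the value of $w$ at an interior minimum point no longer controls the integral terms by a pointwise sign argument alone. The resolution is that $w$ attains its minimum over the whole closed cylinder, so $\int_\Omega w(y,t_0)\,dy \geq |\Omega| \min_{\overline{Q}_\tau} w$, and combined with $c_2 \geq 0$ this term has a definite sign contribution that can be absorbed by choosing $\tau$ small; the same remark applies to the boundary term with $c_3 \geq 0$. A secondary technical point is the low-exponent case: one must be careful that the linearizing coefficients are genuinely bounded, which is why the theorem restricts to the situation where $\overline{u}$ or $\underline{u}$ is strictly positive on $\overline{Q}_T \cup \Gamma_T$ when $\min(p,q,l)<1$ — I would invoke continuity on the compact set $\overline{Q}_\tau$ to get a uniform positive lower bound and thus uniform bounds on $c_1,c_2,c_3$. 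Once these points are dealt with, the iteration to the full interval $[0,T]$ and the extension of the inequality to $\Gamma_T$ by continuity are routine.
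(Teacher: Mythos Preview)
Your linearization step is correct, and the interior-minimum case can indeed be closed along the lines you indicate (after the substitution $z=e^{-\lambda t}w$ with $\lambda$ large, or equivalently via the $w+\epsilon e^{Mt}$ device, the nonlocal reaction term is absorbed). The gap is at the boundary. Hopf's lemma at a boundary minimum $(x_0,t_0)$ gives only the \emph{qualitative} information $\partial w/\partial\nu(x_0,t_0)<0$, with no quantitative control, whereas the nonlocal Neumann inequality yields $\partial w/\partial\nu(x_0,t_0)\ge \int_\Omega c_3\,w(y,t_0)\,dy \ge C_3\min w$, which is also negative. These two statements are perfectly compatible and no contradiction follows; moreover neither the choice of small $\tau$ nor the weight $e^{-\lambda t}$ helps, since $\lambda$ does not enter the boundary condition. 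This is the standard obstruction to pointwise maximum-principle arguments under nonlocal flux conditions, and it is not a detail that sorts itself out when one writes things carefully.

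The paper bypasses this difficulty with an integral (duality) method rather than a pointwise one. It introduces an auxiliary $\varepsilon$-problem whose solution $u_\varepsilon\ge\varepsilon$ decreases to the maximal solution, tests the equation for $w=u_\varepsilon-\overline{u}$ against a nonnegative $\varphi$ with $\partial\varphi/\partial\nu=0$ on $\partial\Omega$, and then chooses $\varphi=\varphi_k$ solving a backward linear problem so that the sign-indefinite term $-bm\theta_1^{m-1}\varphi w$ is absorbed into the equation for $\varphi$. What survives is an inequality of the form
\[
\int_\Omega w_+(x,t)\,dx \;\le\; \int_\Omega w_+(x,0)\,dx + C\varepsilon^m + C\int_0^t\!\!\int_\Omega w_+\,dy\,d\tau,
\]
and Gronwall plus $\varepsilon\to 0$ finishes; the boundary contribution enters only as an integrated term bounded by $C\int_0^t\int_\Omega w_+$, so no Hopf-type argument is ever invoked. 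If you want to salvage a pointwise proof you would need an additional barrier with strictly positive outward normal derivative on $\partial\Omega$ (so that the perturbed boundary inequality becomes strict and genuinely contradicts Hopf), and arranging this uniformly in the size of $c_3$ is a nontrivial extra ingredient your sketch does not supply.
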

\begin{proof}
Suppose that $\min \{p,q,l\} \geq 1.$  Let $T_0 \in (0,T)$ and $\{\varepsilon_n\}$ be decreasing to $0$ a sequence such that
$0<\varepsilon_n<1.$ For $\varepsilon=\varepsilon_n$ let
$u_{0\varepsilon}(x)$ be the functions with the following
properties: $u_{0\varepsilon}(x) \in C^1(\overline\Omega),\,$
$u_{0\varepsilon}(x) \ge \varepsilon,\,$ $u_{0\varepsilon_i}(x)
\ge u_{0\varepsilon_j}(x)$ for $\varepsilon_i>\varepsilon_j, \,$
$u_{0\varepsilon}(x) \to u_{0}(x)$ as $\varepsilon \to 0$ and
$$
\frac{\partial u_{0\varepsilon} (x)}{\partial\nu} = \int_{\Omega}
k(x,y,0) u^l_{0\varepsilon}(y) \, dy
$$
for $x \in \partial \Omega.$ Let $u_\varepsilon$ be the solution of the following auxiliary
problem:
\begin{eqnarray} \label{E:2.1}
\left\{ \begin{array}{ll}
 u_t= \Delta u + a u^p \int_{\Omega} u^q (y,t) \, dy - b u^m + b \varepsilon^m
 \,\,\,&\textrm{for} \,\,\, x \in \Omega, \,\,\,\,\, t > 0, \\
\frac{\partial u(x,t)}{\partial\nu} = \int_{\Omega} k(x,y,t) u^l (y,t) dy \,\,\,
& \textrm{for} \,\,\, x \in \partial \Omega, \,\, t > 0,  \\
u(x,0)= u_{0\varepsilon}(x)  \,\,\,& \textrm{for} \,\,\, x \in
\Omega,
\end{array} \right.
\end{eqnarray}
where $\varepsilon=\varepsilon_n.$ It was proved in \cite{WY} (see also \cite{G2} for similar problem) that
$u_m(x,t)=\lim_{\varepsilon \to 0} u_\varepsilon(x,t)$ is a maximal solution of
 (\ref{v:u})--(\ref{v:n}). To establish theorem we will show that
\begin{equation}\label{E:1.1}
\underline{u}(x,t) \leq u_m(x,t) \leq \overline{u}(x,t) \,\,\,
 \textrm{in} \,\,\, \overline{Q}_{T_0}  \,\,\,
 \textrm{for any} \,\,\, T_0 \in (0,T).
\end{equation}
We prove the second inequality in (\ref{E:1.1}) only, since the
proof of the first one is similar. Let $\varphi (x,t) \in
C^{2,1}(\overline{Q}_{T_0})$ be a nonnegative function such that
$$
\frac{\partial \varphi (x,t)}{\partial \nu} = 0, \; (x,t) \in
S_{T_0}.
$$
If we multiply the first equation in (\ref{E:2.1}) by $\varphi
(x,t)$ and then integrate over $Q_{t}$ for $ 0 < t < T_0,$ we get
\begin{eqnarray}\label{E:1.3}
\hspace{-0.7cm}
\int_\Omega u_\varepsilon(x,t) \varphi(x,t)\, dx
&\leq& \int_\Omega u_\varepsilon(x,0)\varphi (x,0)\, dx +
\varepsilon^m b \int_0^t\int_{\Omega}  \varphi (x,\tau) \, dx d\tau
\nonumber\\
&+& \int_0^t\int_{\Omega} \left[ u_\varepsilon (x,\tau) \varphi_{\tau} (x,\tau) +
u_\varepsilon (x,\tau) \Delta \varphi (x,\tau) \right. \nonumber\\
&+& \left. a u_\varepsilon^p (x,\tau) \int_{\Omega} u_\varepsilon^q (y,\tau) \, dy  - b u_\varepsilon^m \varphi (x,\tau) \right] \,
dx d\tau \nonumber \\
&+&\int_0^t\int_{\partial\Omega} \varphi (x,\tau) \int_\Omega
k(x,y,\tau) u_\varepsilon^l(y,\tau)\, dy  \, dS_x d\tau.
\end{eqnarray}
On the other hand, $\overline{u}$ satisfies (\ref{E:1.3}) with
reversed inequality and with $\varepsilon =0.$ Set
$w(x,t)=u_\varepsilon(x,t) - \overline{u}(x,t).$ Then $w(x,t)$
satisfies
\begin{eqnarray}\label{E:1.5}
\hspace{-0.9cm}
\int_\Omega w(x,t)\varphi (x,t)\, dx &\leq&
\int_\Omega w(x,0)\varphi (x,0)\, dx + \varepsilon^m b
\int_0^t\int_{\Omega}  \varphi \, dx d\tau\nonumber\\
&+& \int_0^t\int_{\Omega} (\varphi_{\tau} + \Delta \varphi -
 b m \theta_1^{m-1} (x,\tau) \varphi) w \, dx d\tau \nonumber \\
&+& \int_0^t\int_{\Omega}  a p \theta_2^{p-1} (x,\tau) w (x,\tau) \int_{\Omega} u_\varepsilon^q(y,\tau)\, dy
 dx d\tau \nonumber \\
&+& \int_0^t\int_{\Omega}  a \overline{u}^p (x,\tau) \int_\Omega q \theta_3^{q-1} (y,\tau) w  (y,\tau)\, dy  dx d\tau \nonumber \\
 &+& \int_0^t\int_{\partial\Omega} \varphi (x,\tau) \int_\Omega
k(x,y,\tau) l \theta_4^{l-1}w(y,\tau)\, dy dS_x d\tau,
\end{eqnarray}
where  $\theta_i, \, i = \overline{1,4}$ are some continuous functions
between $u_\varepsilon$ and $\overline{u}.$ Note here that by
hypotheses for  $k(x,y,t)$, $u_\varepsilon(x,t)$ and
$\overline{u}(x,t)$, we have
\begin{eqnarray}\label{E:1.6}
&&  0 \leq \overline{u}(x,t) \leq M,
\,\, \varepsilon \leq u_\varepsilon(x,t) \leq M \,\,
\textrm{in} \,\,  \overline{Q}_{T_0} \nonumber \\
&&\textrm{and}\,\,\,0 \leq k(x,y,t) \leq M \,\,\, \textrm{in}\,\,
\partial \Omega \times \overline{Q}_{T_0},
\end{eqnarray}
where $M$ is some positive constant. Then it is easy to see from
(\ref{E:1.6}) that $\theta_1^{m-1},\,$ $\theta_2^{p-1},\,$ $\theta_3^{q-1}$ and $\theta_4^{l-1}$ are
positive and bounded functions in $\overline{Q}_{T_0}$ and,
moreover, $  \theta_2^{p-1} \leq  M^{p-1},\,$ $\theta_3^{q-1} \leq  M^{q-1},\,$ $ \theta_4^{l-1} \leq  M^{l-1}.$
Define a sequence $\{a_k
\}$ in the following way: $\, a_k \in C^\infty
(\overline{Q}_{T_0}),\,$ $ a_k \geq 0 \,$ and $\, a_k \to b m
\theta_1^{m-1}\,$ as $\, k \to \infty \,$ in $L^1({Q}_{T_0}).$
Now consider a backward problem given by
\begin{eqnarray}\label{E:1.7}
\left\{ \begin{array}{ll}
\varphi_{\tau} + \Delta \varphi - a_k
\varphi = 0 \,\,\, & \textrm{for} \,\,\,x \in \Omega, \,\,\,
0<\tau<t, \\
\frac{\partial \varphi (x,\tau)}{\partial \nu} = 0 \,\,\,&
\textrm{for} \,\,\, x \in
\partial\Omega, \,\,\, 0 < \tau < t, \\
\varphi(x,t)= \psi (x) \,\,\,& \textrm{for} \,\,\,x \in \Omega,
\end{array} \right.
\end{eqnarray}
where $\psi (x)\in C_0^\infty (\Omega)$ and $ 0 \leq \psi (x) \leq
1.$ Denote a solution of (\ref{E:1.7}) as $\varphi_k (x,\tau).$
Then by the standard theory for linear parabolic equations (see
\cite{LSU}, for example), we find that  $\varphi_k \in
C^{2,1}(\overline{Q}_{t}),$  $0 \leq \varphi_k (x,\tau) \leq 1$ in
$\overline{Q}_{t}.$  Putting $\varphi = \varphi_k$ in
(\ref{E:1.5}) and passing then to the limit as $k \to \infty,$ we
infer
\begin{eqnarray}\label{E:1.8}
 \int_\Omega w(x,t)\psi (x)\, dx &\leq& \int_\Omega w(x,0)_+ \, dx +  \varepsilon^m b  T_0 \vert \Omega \vert  \nonumber\\
&+& [a (p+q) \vert\Omega\vert M^{p+q-1} +  l \vert \partial\Omega \vert M^l ] \int_0^t
\int_\Omega w(y,\tau)_+ \, dy d\tau,
\end{eqnarray}
where $w_+ = \max \{w,0 \},$ $\vert \partial \Omega\vert$ and $\vert \Omega\vert $ are
the Lebesgue measures of $\partial \Omega$ in $\mathbb R^{N-1}$ and
$\Omega$ in $\mathbb R^N,$ respectively. Since (\ref{E:1.8}) holds
for every $\psi (x),$ we can choose a sequence $\{ \psi_k \}$
converging in $L^1(\Omega)$ to $\psi (x) =1$ if $w(x,t) > 0$ and $\psi
(x) = 0$ otherwise. Hence, from (\ref{E:1.8}) we get
\begin{eqnarray*}
\int_\Omega w(x,t)_+ \, dx  &\leq& \int_\Omega w(x,0)_+ \, dx +  \varepsilon^m b  T_0 \vert \Omega\vert  \nonumber\\
&+& [a (p+q) \vert\Omega\vert M^{p+q-1} +  l \vert\partial\Omega\vert M^l ] \int_0^t
\int_\Omega w(y,\tau)_+ \, dy d\tau.
\end{eqnarray*}
Applying now Gronwall's inequality and passing to the limit
$\varepsilon \to 0,$ the conclusion of the theorem follows for  $\min (p,q,l) \geq 1.$
For the case $\min (p,q,l) < 1$ we can consider
$w(x,t)=\underline{u}(x,t) - \overline{u}(x,t)$ and prove the
theorem in a similar way using the positiveness of a subsolution
or a supersolution.
\end{proof}

\begin{remark}\label{Rem1}
Theorem~\ref{Th3} holds for $q=0.$
\end{remark}
\begin{remark}
We improve a comparison principle in \cite{WY}. The authors of \cite{WY} supposed that
$\underline{u}(x,t)> 0$ or $\overline{u}(x,t) > 0$ in ${Q}_T\cup
\Gamma_T$ if $\min (m, p, q, l) < 1.$
\end{remark}
Next lemma shows the positiveness for $t>0$  of solutions of~(\ref{v:u})--(\ref{v:n})  with $m \geq 1$ and $u_0 \not\equiv 0.$
\begin{lemma}\label{Lem1}
Let $u_0 \not\equiv 0 $ in $\Omega,$  $m \geq 1.$  Suppose $u$ is a solution of
(\ref{v:u})--(\ref{v:n}) in $Q_T.$ Then $u>0$ in ${Q}_T \cup S_T.$
\end{lemma}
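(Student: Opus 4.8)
The plan is to reduce the problem to a linear parabolic differential inequality for $u$ and then apply the strong maximum principle together with Hopf's lemma. First I would observe that since $u$ is a nonnegative classical solution in $Q_T\cup\Gamma_T$, all the coefficients built from $u$ are bounded on $\overline{Q}_{T_0}$ for any $T_0\in(0,T)$: writing $c_1(x,t)=au^{p-1}(x,t)\int_\Omega u^q(y,t)\,dy$ (which is nonnegative and bounded because $p\ge 1$ is not needed here — $p>0$ and $u$ bounded away from nothing is fine since the term is $au^p\int u^q\ge 0$, so we simply drop it) and $c_2(x,t)=-bu^{m-1}(x,t)$, the equation \eqref{v:u} gives
\begin{equation*}
u_t-\Delta u = au^p\int_\Omega u^q\,dy - bu^m \ge -b\,u^{m-1}\,u = c(x,t)\,u,
\end{equation*}
where $c(x,t)=-bu^{m-1}(x,t)$ is bounded on $\overline{Q}_{T_0}$ precisely because $m\ge 1$ and $u$ is bounded. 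Thus $u$ is a nonnegative supersolution of the linear operator $\mathcal{L}w=w_t-\Delta w - c(x,t)w$ with bounded coefficient $c$.

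Next I would invoke the strong maximum principle for the operator $\mathcal{L}$ (see e.g. Protter–Weinberger or the parabolic version in \cite{LSU}): since $u_0\not\equiv 0$ and $u_0\ge 0$, and $u$ satisfies $\mathcal{L}u\ge 0$ with $u\ge 0$, the function $u$ cannot attain the value $0$ at any interior point $(x_0,t_0)\in Q_T$ unless $u\equiv 0$ on $\overline{\Omega}\times[0,t_0]$; but $u(\cdot,0)=u_0\not\equiv 0$ rules this out. Hence $u>0$ in $Q_T$. For the boundary points, suppose $u(x_0,t_0)=0$ for some $(x_0,t_0)\in S_T$. Since $u>0$ in $Q_T$ and $u(x_0,t_0)=0$ is then a minimum over $\overline{\Omega}\times(0,t_0]$, Hopf's boundary point lemma applies at $(x_0,t_0)$ and yields $\partial u(x_0,t_0)/\partial\nu<0$. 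On the other hand the boundary condition \eqref{v:g} gives
\begin{equation*}
\frac{\partial u(x_0,t_0)}{\partial\nu}=\int_\Omega k(x_0,y,t_0)u^l(y,t_0)\,dy\ge 0,
\end{equation*}
because $k\ge 0$ and $u\ge 0$. This contradiction shows $u>0$ on $S_T$ as well, completing the proof.

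The main technical point to watch is the boundedness of the coefficient $c(x,t)=-bu^{m-1}$: this is exactly where the hypothesis $m\ge 1$ is used, since for $m<1$ the factor $u^{m-1}$ could blow up as $u\to 0^+$ and the linear parabolic theory (strong maximum principle, Hopf lemma) would not directly apply. A secondary point is that Hopf's lemma requires the interior sphere condition at $x_0$, which is guaranteed by the smoothness of $\partial\Omega$, and requires $u$ to be $C^1$ up to the boundary near $(x_0,t_0)$, which holds by the definition of solution ($u\in C^{1,0}(Q_T\cup\Gamma_T)$). One should also note that discarding the nonnegative term $au^p\int_\Omega u^q\,dy$ only strengthens the differential inequality in the direction we need, so no sign condition on $p,q$ is required here. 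I expect the only genuine obstacle to be stating the strong maximum principle in a form valid for merely continuous (not Hölder) coefficient $c$; this is standard, but if one wants to be careful one can instead absorb the bound by writing $u_t-\Delta u + b\|u\|_\infty^{m-1}u\ge 0$ with a constant coefficient, reducing to the cleanest textbook version.
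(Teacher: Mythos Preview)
Your proof is correct and follows essentially the same approach as the paper: discard the nonnegative nonlocal source, use $m\ge 1$ to make the absorption coefficient bounded, then apply the strong maximum principle in $Q_T$ and Hopf's lemma on $S_T$ against the sign of the boundary condition~\eqref{v:g}. The only cosmetic difference is that the paper carries out explicitly the constant-coefficient reduction you mention at the end, writing $v=u\exp(\lambda t)$ with $\lambda\ge b\|u\|_{L^\infty(\overline{Q}_\tau)}^{\,m-1}$ so that $v_t-\Delta v\ge 0$, and then applies the maximum principle and Hopf's lemma to $v$ rather than to $u$ directly.
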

\begin{proof}
We take any $\tau \in (0,T).$
As  $u(x,t)$ is continuous in $\overline{Q}_\tau$ function, then we have
\begin{equation}\label{E:1.10}
 u(x,t) \leq M, \,\, (x,t) \in \overline{Q}_\tau
\end{equation}
with some positive constant $M.$ Now we put $v= u\exp (\lambda t),$
where $\lambda \geq bM^{m-1}.$ It is easy to verify that $v_t - \Delta
v \geq 0.$ Since $v (x,0) = u_0(x)\not\equiv0$ in $\Omega$ and $v
(x,t) \geq 0$ in $Q_\tau,$ by the strong maximum principle $v(x,t)>0$
in $Q_\tau.$ Let $v(x_0,t_0)=0$ in some point $(x_0,t_0)\in S_T.$
Then according to Theorem~3.6 of~\cite{H} it yields $\partial
v(x_0,t_0)/\partial\nu < 0,$ which contradicts~(\ref{v:g}).
\end{proof}


\section{Global existence}\label{gl}

\begin{theorem}\label{global}
Let $\max (p+q,l) \leq 1$ or $1 < \max (p+q,l) < m.$  Then every solution
of (\ref{v:u})--(\ref{v:n}) is global.
\end{theorem}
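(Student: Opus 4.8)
The plan is to construct an explicit global supersolution and then invoke the comparison principle (Theorem~\ref{Th3}) to dominate every solution. The natural ansatz is a spatially dependent but time-growing function of the form $\overline{u}(x,t) = M(t)\varphi(x)$, where $\varphi \in C^2(\overline\Omega)$ is a fixed positive function chosen so that $-\Delta\varphi \geq 0$ and $\partial\varphi/\partial\nu \geq c_0 > 0$ on $\partial\Omega$ (for instance a suitable multiple of $e^{\beta\cdot x}$ restricted to $\Omega$, or $\varphi(x) = A + |x - x_*|^2$ with $x_*$ outside $\Omega$ rescaled appropriately; one needs $\varphi$ bounded between two positive constants and with strictly positive normal derivative). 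With such a $\varphi$ fixed, write $c_1 \leq \varphi \leq c_2$ on $\overline\Omega$ and $\partial\varphi/\partial\nu \geq c_0$ on $\partial\Omega$.

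First I would record the differential inequalities that $M(t)$ must satisfy. Plugging $\overline{u} = M(t)\varphi(x)$ into \eqref{v:sup^u}, using $-\Delta\varphi \geq 0$, $b\overline{u}^m \geq 0$, and $\int_\Omega \varphi^q\,dy \leq c_2^q|\Omega|$, it suffices that
\begin{equation*}
M'(t)\varphi(x) \geq a M(t)^{p+q}\varphi(x)^p c_2^q|\Omega| - b M(t)^m \varphi(x)^m,
\end{equation*}
and dividing by $\varphi \geq c_1 > 0$ it is enough to require
\begin{equation*}
M'(t) \geq a c_2^{p+q}|\Omega| c_1^{-1} M(t)^{p+q} - b c_1^{m-1} M(t)^m .
\end{equation*}
For the boundary condition \eqref{v:sup^g}, since $\partial\overline{u}/\partial\nu = M(t)\partial\varphi/\partial\nu \geq c_0 M(t)$ while $\int_\Omega k\,\overline{u}^l\,dy \leq \|k\|_\infty |\Omega| c_2^l M(t)^l$, it suffices that $c_0 M(t) \geq \|k\|_\infty|\Omega| c_2^l M(t)^l$, i.e. $M(t)^{1-l} \geq \|k\|_\infty|\Omega|c_2^l/c_0$ when $l < 1$, or $M(t)^{l-1} \leq c_0/(\|k\|_\infty|\Omega|c_2^l)$ when $l \geq 1$; in either case this is arranged by taking $M(t)$ in a suitable range (bounded below when $l>1$, bounded above when $l<1$). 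Finally the initial condition needs $M(0)\varphi(x) \geq u_0(x)$, which holds for $M(0) \geq \|u_0\|_\infty/c_1$.

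The core of the argument is then the ODE analysis: one must exhibit a solution $M(t)$ of $M'(t) = a c_2^{p+q}|\Omega|c_1^{-1}M^{p+q} - b c_1^{m-1}M^m$ (or of a convenient majorant ODE) that is defined for all $t > 0$ and simultaneously respects the boundary-condition range constraint. In the subcritical case $\max(p+q,l) \leq 1$: the map $M \mapsto M^{p+q}$ is sublinear, so the ODE $M' = C_1 M^{p+q}$ (dropping the helpful absorption term, which only makes the right side smaller) has global solutions growing at most polynomially; one picks $M(0)$ large enough to dominate $u_0$ and large enough (if needed, in the $l\le1$ subcase) for the boundary constraint, noting $M(t)$ is nondecreasing so the lower bound persists, and in the $l\le 1$ situation one should check the upper-range constraint is not needed since $l\le1$ gives a lower-bound-type requirement. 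In the case $1 < \max(p+q,l) < m$: here the absorption term is essential. If $p+q > 1$ then since $m > p+q$ the right side $C_1 M^{p+q} - C_2 M^m$ is negative for $M$ large, so there is a finite positive equilibrium $M_* = (C_1/C_2)^{1/(m-p+q)}$... more precisely $(C_1/C_2)^{1/(m-(p+q))}$, and the constant function $M(t) \equiv \max(M_*, \|u_0\|_\infty/c_1, \text{(boundary bound)})$ — or rather any sufficiently large constant, for which $C_1 M^{p+q} - C_2 M^m \leq 0 = M'$ — is a global supersolution provided that large constant also satisfies the $l$-range constraint; since $l < m$ and $l$ could be $>1$, the boundary constraint when $l>1$ is an \emph{upper} bound $M^{l-1} \leq c_0/(\cdots)$, which conflicts with wanting $M$ large, so one must instead balance: choose $\varphi$ with $c_0$ as large and $c_2$ as close to $c_1$ as the geometry allows, or — cleaner — absorb the boundary term differently by allowing $M(t)$ to be a large constant and noting the constraint $M^{l-1}\le c_0/(\|k\|_\infty|\Omega|c_2^l)$ forces us instead to use a \emph{time-dependent} barrier only on a first interval. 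The cleanest route, which I would actually pursue, is to treat the equation-part and boundary-part constants together in one ODE by instead choosing $\overline u(x,t)=M(t)\varphi(x)$ with $M$ solving $M' = C_1 M^{\max(p+q,l)}$ when $\max(p+q,l)\le1$ and $M\equiv\text{const}$ large when $1<\max(p+q,l)<m$, and verifying directly that with $C_1$ chosen as the sum of the equation and boundary coefficients both inequalities hold — the key point being that when $\max(p+q,l)<m$ the genuine absorption $-bc_1^{m-1}M^m$ eventually dominates any power $M^{\max(p+q,l)}$ coming from either the reaction or the boundary flux once $M$ is large, so a large constant works.

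\textbf{Main obstacle.} The delicate point is the interaction between the boundary flux and the choice of $M(t)$: making $M$ large to dominate $u_0$ and to let absorption win in the equation is at odds with the $l \geq 1$ boundary constraint, which caps $M^{l-1}$. I expect the resolution is that for $\overline u = M\varphi$ the boundary inequality with $M$ \emph{large} reads $c_0 M \geq \|k\|_\infty|\Omega|c_2^l M^l$, and since $l < m$ but possibly $l > 1$ this genuinely requires $M$ not too large — so the honest construction must add a growing-in-time correction or, more simply, incorporate the boundary contribution into the \emph{equation} comparison by a different barrier shape (e.g. $\overline u$ solving an auxiliary linear problem with large source), and the real content of the proof is checking these constants can be simultaneously satisfied. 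Everything else — the ODE solvability and the final appeal to Theorem~\ref{Th3} (legitimate here because the constructed $\overline u$ is strictly positive, covering the $\min(p,q,l)<1$ case) — is routine.
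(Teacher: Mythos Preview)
Your approach is adequate for the first regime $\max(p+q,l)\le 1$ (modulo a minor slip: a nonconstant $\varphi$ cannot satisfy both $-\Delta\varphi\ge 0$ in $\Omega$ and $\partial\varphi/\partial\nu>0$ on all of $\partial\Omega$, by the strong maximum principle; your own examples have $\Delta\varphi>0$, which only adds a harmless linear term to the ODE for $M$). The genuine gap is in the second regime when $l>1$.

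You correctly locate the obstruction but do not resolve it, and with the separated ansatz $\overline u=M(t)\varphi(x)$ it cannot be resolved. The boundary inequality reads $c_0 M\ge C M^l$, which for $l>1$ forces $M\le (c_0/C)^{1/(l-1)}$, a fixed constant depending only on $\varphi$ and $k$; yet to dominate \emph{arbitrary} initial data you need $M(0)\ge\|u_0\|_\infty/c_1$, which is unbounded over admissible $u_0$. Rescaling $\varphi$ does not help, since the relevant ratio $c_0 c_1^{\,l-1}/c_2^{\,l}$ is scale-invariant. A smooth, bounded, separated barrier therefore cannot absorb a superlinear boundary flux, and the vaguely proposed fixes (``growing-in-time correction'', ``different barrier shape'') do not address this.

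The paper's proof of the case $1<\max(p+q,l)<m$ is structurally different. It does not build a single explicit supersolution of the original problem. Instead it (i) replaces $k$ by its supremum $K$ and passes to an auxiliary solution $v$; (ii) integrates the equation over $Q_t$ and uses $m>\max(p+q,l)$ with H\"older to obtain an a~priori bound on $\int_0^t\!\int_\Omega v^m$; (iii) combines this with Green-function estimates and a contradiction argument to get a uniform bound on $\int_\Omega v^q(\cdot,t)$, which reduces the nonlocal reaction to a local one $a c_8 u^p$; and (iv) handles the remaining superlinear boundary flux by a \emph{boundary-layer} barrier
\[
\overline v(x)=\bigl[(s+\varepsilon)^{-\gamma}-\omega^{-\gamma}\bigr]_+^{\beta/\gamma}+A,\qquad s=\mathrm{dist}(x,\partial\Omega).
\]
The mechanism is that $\partial\overline v/\partial\nu\big|_{s=0}\sim\varepsilon^{-(\beta+1)}$ while $\int_\Omega\overline v^{\,l}\sim\varepsilon^{-(\beta l-1)}$, so the boundary inequality holds for small $\varepsilon$ when $\beta<2/(l-1)$; and in the interior the absorption $b\,\overline v^{\,m}\sim(s+\varepsilon)^{-\beta m}$ dominates $|\Delta\overline v|\sim(s+\varepsilon)^{-(\beta+2)}$ when $\beta>2/(m-1)$. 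The window $2/(m-1)<\beta<2/(l-1)$ is nonempty precisely because $l<m$. This singular boundary-layer construction is the missing idea in your proposal.
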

\begin{proof}
In order to prove global existence of solutions we construct a
suitable explicit supersolution of~(\ref{v:u})--(\ref{v:n}) in
$Q_T$ for any positive $T.$ Suppose at first that $\max (p+q,l) \leq 1.$
Since $k(x,y,t)$ is a continuous function, there exists a constant
$K>0$ such that
\begin{equation}\label{K}
k(x,y,t)\leq K
\end{equation}
in $\partial\Omega\times Q_T.$ Let $\lambda_1$ be the first
eigenvalue of the following problem
\begin{equation}\label{EF}
    \begin{cases}
        \Delta\varphi+\lambda\varphi=0,\;x\in\Omega,\\
        \varphi(x)=0,\;x\in\partial\Omega,
    \end{cases}
\end{equation}
and $\varphi(x)$ be the corresponding eigenfunction with
$\sup\limits_{\Omega}\varphi(x)=1$. It is well known,
$\varphi(x)>0$ in $\Omega$ and $\max\limits_{\partial\Omega}
\partial\varphi(x)/\partial\nu < 0.$

We now construct a supersolution of~(\ref{v:u})--(\ref{v:n}) in $Q_T$ as follows
\begin{equation*}
\overline u (x,t) = \frac{C\exp (\mu t)}{c \varphi (x) + 1},
\end{equation*}
where constants $C,\mu$ and $c$ are chosen to satisfy the
inequalities:
\begin{equation*}
c\geq \max \left\{ K \int_\Omega \frac{dy}{(\varphi (y) + 1)^l}
\max_{\partial \Omega} \left(-\frac{\partial \varphi}{\partial
\nu} \right)^{-1}, 1 \right\},
\end{equation*}
\begin{equation*}
C \geq \max \{ \sup_\Omega (c \varphi (x) + 1) u_0 (x), 1 \},
\quad \mu \geq \lambda_1 + 2 c^2 \sup_\Omega \frac{\vert\nabla \varphi
\vert^2 } {(c \varphi (x) + 1)^2} + a \vert\Omega\vert.
\end{equation*}
It is easy to obtain
\begin{align}
\overline u_t - \Delta\overline u &- a \overline u^p \int_{\Omega} \overline u^q (y,t) \, dy + b \overline u^m \nonumber \\
&\geq \left( \mu - \frac{c\lambda_1\varphi}{c \varphi (x) + 1} - 2
c^2 \sup_\Omega \frac{\vert\nabla \varphi \vert^2 } {(c \varphi (x) +
1)^2} - a \vert\Omega\vert \right) \overline u  \geq 0 \label{E1}
\end{align}
for $(x,t) \in Q_T,$
\begin{align}
\frac{\partial \overline u}{\partial\nu} =& c C \exp (\mu t)
\left(-\frac{\partial \varphi}{\partial \nu} \right) \geq  K C^l
\exp (l\mu t) \int_\Omega \frac{dy}{(\varphi (y) + 1)^l} \nonumber \\
\geq& \int_{\Omega} k(x,y,t)\overline u^l(y,t) \,dy \label{E2}
\end{align}
for $(x,t) \in S_T$ and
\begin{equation}\label{E3}
\overline u(x,0)\geq u_0(x)
\end{equation}
for $x \in \Omega.$ By virtue of (\ref{E1})--(\ref{E3}) and Theorem~\ref{Th3},
the solution of (\ref{v:u})--(\ref{v:n}) exists globally.

Suppose now that $1 < \max (p+q,l) < m.$ Let $v_0 (x)$ be the function with the following properties:
\begin{equation*}
v_0(x)\in C^1(\overline{\Omega}),\;v_0(x) > u_0(x)\textrm{ in
}\overline \Omega,\;\frac{\partial v_0(x)}{\partial\nu}=\int_{\Omega}{K v_0^l(y)}\,dy \textrm{ on } \partial\Omega,
\end{equation*}
where $K$ was defined in (\ref{K}). Let $v(x,t)$ be the solution of  (\ref{v:u}) in $Q_T$ with
boundary and initial data
\begin{equation}\label{g}
\frac{\partial u(x,t)}{\partial\nu}=
\int_{\Omega}{K u^l(y,t)}\,dy, \; x\in\partial\Omega, \; 0 < t < T,
\end{equation}
\begin{equation}\label{n}
    u(x,0)=v_{0}(x),\; x\in\Omega.
\end{equation}
We prove that $v (x,t)$ exists in $Q_T$ for any $T > 0.$
 By Lemma~\ref{Lem1} and Theorem~\ref{Th3}
\begin{equation*}
u (x,t) \le v(x,t) \textrm{ in } Q_T.
\end{equation*}
 Integrating (\ref{v:u}) for $v (x,t)$ over $Q_t$ and using Green's identity  and H{$\ddot o$}lder's inequality, we have
\begin{align}
& \int_\Omega  v (x,t) \,dx =  \int_\Omega  v_0 (x) \,dx + \int_0^t \int_{\partial \Omega} \int_\Omega K v^l(y,\tau) \,dy \,dS_x \, d\tau  \nonumber \\
&+ a \int_0^t \int_\Omega v^p (y,\tau) \, dy \int_{\Omega} v^q (y,\tau) \, dy \, d\tau -  b \int_0^t \int_\Omega v^m (y,\tau) \, dy \, d\tau  \nonumber \\
&\leq  \int_\Omega  v_0 (x) \,dx + K \vert\partial\Omega\vert (t\vert\Omega\vert)^\frac{m-l}{m}  \left( \int_0^t  \int_\Omega   v^m (y,\tau) \, dy \, d\tau \right)^\frac{l}{m}  \nonumber \\
& + a t^\frac{m-p-q}{m} \vert\Omega\vert^\frac{2m-p-q}{m} \left( \int_0^t  \int_\Omega   v^m (y,\tau) \, dy \, d\tau \right)^\frac{p+q}{m}
- b \int_0^t  \int_\Omega   v^m (y,\tau) \, dy \, d\tau. \label{E4}
\end{align}
Since $v(x,t) > 0,$ we obtain from (\ref{E4}) that
\begin{equation}\label{E5}
\int_0^t  \int_\Omega   v^m (y,\tau) \, dy \, d\tau \leq C(T), \; t \leq T.
\end{equation}

Now we prove that $v(x,t) $ can not blow up in $\Omega.$

Let $G_N(x,y;t-\tau)$ be the Green function of
the heat equation with homogeneous Neumann boundary condition.
Then we have the representation formula:
\begin{align}
        v(x,t)&=\int_\Omega{G_N(x,y;t)v_0(y)} dy + K \int_0^t{\int_{\partial\Omega}G_N(x,\xi;t-\tau) \int_{\Omega}  v^l(y,\tau)} dy dS_\xi d\tau  \nonumber  \\
        &+ a \int_0^t{\int_\Omega G_N(x,y;t-\tau) v^p(y,\tau)\int_\Omega v^q(z,\tau)}\,dz  \,dy d\tau  \nonumber \\
        &- b \int_0^t{\int_\Omega} G_N(x,y;t-\tau) v^m(y,\tau)\,dy d\tau \label{blow:equat}
\end{align}
for $(x,t) \in Q_T.$ We now take a sequence of sets $\{ \Omega_k \}$ such that
$\Omega_k\subset\subset\Omega, \,$  $\Omega_k\subset\Omega_{k+1}, \,$
 $\Omega = \cup_{k=1}^{\infty} \Omega_k, \,$ $\partial\Omega_k\in C^2.$
 It is well known (see, for example,~\cite{H},\cite{Kahane}) that
\begin{equation}\label{gl:1G_N}
        G_N(x,y;t-\tau)\geq0,\;x,y\in\Omega,\;0\leq\tau<t<T,
    \end{equation}
    \begin{equation}\label{gl:2G_N}
    \int_{\Omega}{G_N(x,y;t-\tau)}\,dy=1,\;x\in\Omega,\;0\leq\tau<t<T,
    \end{equation}
    \begin{equation}\label{blow:G_N}
    0\leq G_N(x,y;t-\tau)\leq
    c_0,\;x\in\Omega_k,\;y\in\partial\Omega,\;0<\tau<t<T,
    \end{equation}
       \begin{equation}\label{G1}
    c_1 \leq G_N(x,y;t-\tau)\leq
    c_2,\;x\in \overline\Omega,\; y\in\overline\Omega,\; t - \tau \ge \varepsilon_0,
    \end{equation}
          \begin{equation}\label{G2}
    \frac{c_3}{\sqrt{t - \tau}} \leq \int_{\partial\Omega}G_N(x,\xi;t-\tau) dS_\xi
    \leq   \frac{c_4}{\sqrt{t - \tau}},\;x\in \partial\Omega,\; 0 < t - \tau \le \varepsilon_0,
    \end{equation}
where $\varepsilon_0 >0, \,$ $c_0$ is a positive constant depending on
$k.$ Here and below $c_i, \, i \in \mathbb{N}$ are positive constants.
We note that
\begin{equation}\label{2}
     v^p(y,t)\int_\Omega v^q(z,t) \,dz \leq v^{p+q}(y,t) + \left( \int_\Omega v^q(z,t) \,dz \right)^\frac{p+q}{q} \, \textrm{for} \,\,\, (y, t) \in Q_T.
\end{equation}
Indeed, if
\begin{equation*}
\int_\Omega v^q(z,t) \,dz \leq v^{q}(y,t),
\end{equation*}
then
\begin{equation*}
v^p(y,t)\int_\Omega v^q(z,t) \,dz \leq v^{p+q}(y,t).
\end{equation*}
Otherwise, we have
\begin{equation*}
 v^p(y,t)  \leq  \left( \int_\Omega v^q(z,t) \,dz \right)^\frac{p}{q}
\end{equation*}
and
\begin{equation*}
  v^p(y,t)\int_\Omega v^q(z,t) \,dz  \leq  \left( \int_\Omega v^q(z,t) \,dz \right)^\frac{p+q}{q}.
\end{equation*}
Now using (\ref{gl:2G_N}), (\ref{2}) and H$\ddot{o}$lder's inequality, we estimate the third integral in the right hand side of (\ref{blow:equat}):
\begin{align}
        & a \int_0^t{\int_\Omega G_N(x,y;t-\tau)  v^p(y,\tau)\int_\Omega v^q(z,\tau)}\,dz  \,dy d\tau  \nonumber \\
        &\leq a \int_0^t{\int_\Omega} G_N(x,y;t-\tau) \left[ v^{p+q}(y,\tau) + \left( \int_\Omega v^q(z,\tau) \,dz \right)^\frac{p+q}{q} \right] \,dy d\tau  \nonumber \\
        &\leq \int_0^t{\int_\Omega} G_N(x,y;t-\tau) \left[ b v^{m}(y,\tau) +  a \left( \frac{a}{b} \right)^\frac{p+q}{m-(p+q)} \right.  \nonumber \\
        &+  \left. a \vert\Omega\vert^\frac{(m-q)(p+q)}{mq} \left( \int_\Omega v^m(z,\tau) \,dz \right)^\frac{p+q}{m} \right] \,dy d\tau  \nonumber \\
        &\leq b  \int_0^t{\int_\Omega} G_N(x,y;t-\tau) v^m (y,\tau) \,dy d\tau + a \left( \frac{a}{b} \right)^\frac{p+q}{m-(p+q)} t  \nonumber \\
        &+ a\vert\Omega\vert^\frac{(m-q)(p+q)}{mq} \int_0^t \left( \int_\Omega v^m(z,\tau) \,dz \right)^\frac{p+q}{m} \,d\tau. \label{3}
\end{align}
 By~(\ref{E5}) -- (\ref{blow:G_N}), (\ref{3}) and H$\ddot{o}$lder's inequality we have
\begin{align}
        \sup_{\Omega_k} v(x,t) \leq& \sup_\Omega v_0(x) + c_0 \vert\partial \Omega\vert K \int_0^t \int_\Omega v^l (y,\tau) \, dy  d\tau \nonumber \\
       +& c_5 T + c_6 T^\frac{m-p-q}{m} \left[  \int_0^t \int_\Omega v^m (y,\tau) \, dy  d\tau \right]^\frac{p+q}{m}
       \leq c_7 (T). \label{E6}
\end{align}

Suppose that $v(x,t)$ blows up at  $t = T $ and
\begin{equation}\label{N0}
\int_\Omega v^q (y,T) \, dy = +\infty.
\end{equation}
Then by H$\ddot{o}$lder's inequality we derive that
\begin{equation}\label{N1}
\int_\Omega v^m (y,T) \, dy = +\infty.
\end{equation}
Since $v(x,t)$ blows up at  $t = T $ from  (\ref{blow:equat}), (\ref{G1}), (\ref{G2}), (\ref{3})  we obtain
\begin{equation}\label{N2}
\int_\Omega v^l (y,T) \, dy = +\infty
\end{equation}
and, moreover,
\begin{equation}\label{N21}
v(x,t)  \,  \textrm{blows up at } \,  t = T \,  \textrm{at every point of  } \, \partial \Omega.
\end{equation}
Integrating (\ref{v:u}) for $v(x,t)$ over $\Omega$ and using H{$\ddot o$}lder's inequality, we have
\begin{align*}
\int_\Omega  v_t (x,t) \,dx & =  K \int_{\partial \Omega} \int_\Omega v^l(y,t) \,dy \,dS_x + a \int_\Omega v^p (y,t) \, dy \int_{\Omega} v^q (y,t) \, dy   \\
&-  b \int_\Omega v^m (y,t) \, dy  \leq  K \vert\partial \Omega\vert \vert\Omega\vert^\frac{m-l}{m} \left(  \int_\Omega   v^m (y,t) \, dy  \right)^\frac{l}{m}  \\
& + a \vert\Omega\vert^\frac{2m-p-q}{m} \left(  \int_\Omega   v^m (y,t) \, dy  \right)^\frac{p+q}{m} - b \int_\Omega   v^m (y,t) \, dy.
\end{align*}
Thus by (\ref{N1})
\begin{equation}\label{N3}
\lim_{t \to T} \int_\Omega  v_t (x,t) \,dx = -\infty.
\end{equation}
On the other hand, integrating (\ref{v:u}) for $v(x,t)$ over $\Omega_k$, we infer
\begin{equation*}
\int_{\Omega_k}  v_t (x,t) \,dx =  \int_{\partial \Omega_k} \frac{\partial v(x,t)}{\partial\nu} \,dS_x + a \int_{\Omega_k}  v^p (y,t) \, dy \int_{\Omega}  v^q (y,t) \, dy
- b \int_{\Omega_k}   v^m (y,t) \, dy
\end{equation*}
and using (\ref{g}), (\ref{E6}), (\ref{N0}), (\ref{N2}), (\ref{N21}) we obtain the contradiction  with  (\ref{N3}).

Hence,
\begin{equation}\label{E7}
 \int_\Omega v^q (y,t) \, dy \leq c_8 (T), \; t \leq T.
\end{equation}

Now we consider the following equation
\begin{equation}\label{E8}
Lu \equiv u_t - \Delta u - a c_8 (T) u^p + b u^m = 0,\;(x, t) \in Q_T.
\end{equation}
It is easy to see that $v(x,t)$ is the subsolution of the problem (\ref{E8}), (\ref{g}), (\ref{n}) in $Q_T.$
To construct a supersolution we use the change of variables in a
	neighborhood of $\partial \Omega$ as in \cite{CPE}. Let
	$\overline x\in\partial \Omega$ and $\widehat{n}
	(\overline x)$ be the inner unit normal to $\partial \Omega$ at the
	point $\overline x.$ Since $\partial \Omega$ is smooth it is well
	known that there exists $\delta >0$ such that the mapping $\psi
	:\partial \Omega \times [0,\delta] \to \mathbb{R}^n$ given by
	$\psi (\overline x,s)=\overline x +s\widehat{n} (\overline x)$
	defines new coordinates ($\overline x,s)$ in a neighborhood of
	$\partial \Omega$ in $\overline\Omega.$ A straightforward
	computation shows that, in these coordinates, $\Delta$ applied to
	a function $g(\overline x,s)=g(s),$ which is independent of the
	variable $\overline x,$ evaluated at a point $(\overline x,s)$ is
	given by
	\begin{equation}\label{Gl:new-coord}
	\Delta g(\overline x,s)=\frac{\partial^2g}{\partial s^2}(\overline x,s)-\sum_{j=1}^{n-1}\frac{H_j(\overline x)}{1-s
		H_j (\overline x)}\frac{\partial g}{\partial s}(\overline x,s),
	\end{equation}
	where $H_j (\overline x)$ for $j=1,...,n-1,$ denote the principal
	curvatures of $\partial\Omega$ at $\overline x.$ For $0\leq s\leq \delta$
	and small $\delta$  we have
	\begin{equation}\label{Gl:enq4}
	\left\vert\sum_{j=1}^{n-1} \frac{H_j (\overline x)}{1-s H_j (\overline
		x)}\right\vert\leq\overline c.
	\end{equation}
	
	Let $0<\varepsilon<\omega<\min(\delta, 1), $
	$\max(1/l, 2/(m-1)) <\beta< 2/(l-1),$  $0<\gamma<\beta/2,$ $A\ge\sup_\Omega v_0(x).$
	As in \cite{GK4} for points in $Q_{\delta,T}=\partial \Omega \times [0,
	\delta]\times [0,T]$ of coordinates $(\overline x,s,t)$ define
	\begin{equation}\label{Gl:function}
	\overline v (x,t)=  \overline v ((\overline x,s),t)=\left[(s+\varepsilon)^{-\gamma}-\omega^{-\gamma}\right]_+^\frac{\beta}{\gamma}+A,
	\end{equation}
	where $s_+=\max(s,0).$ For points in $\overline{Q_T}\setminus Q_{\delta,T}$
	we set  $ \overline v(x,t)= A.$ We prove that $ \overline v(x,t)$
	is the supersolution of (\ref{E8}), (\ref{g}), (\ref{n}) in $Q_T.$
	It is not difficult to check that
	\begin{equation}\label{Gl:enq1}
	\left\vert\frac{\partial \overline v}{\partial s}\right\vert\leq \beta\min\left(\left[ D(s)\right]^\frac{\gamma+1}{\gamma}\left[( s+\varepsilon)^{-\gamma}-\omega^{-\gamma}\right]_+^\frac{\beta+1}{\gamma},\,(s+\varepsilon)^{-(\beta+1)}\right),
	\end{equation}
	\begin{equation}\label{Gl:enq2}
	\left\vert\frac{\partial^2 \overline v}{\partial s^2}\right\vert\leq \beta (\beta+1)\min\left(\left[D(s)\right]^\frac{2(\gamma+1)}{\gamma}\left[( s+\varepsilon)^{-\gamma}-
	\omega^{-\gamma}\right]_+^\frac{\beta+2}{\gamma},\,( s+\varepsilon)^{-(\beta+2)}\right),
	\end{equation}
	where
	\begin{equation*}
	D(s)= \frac{( s+\varepsilon)^{-\gamma}}{ (s+\varepsilon)^{-\gamma}-\omega^{-\gamma}}.
	\end{equation*}
	Then $D^\prime(s)>0$ and for any $\overline\varepsilon>0$
	\begin{equation}\label{Gl:enq3}
	1\leq D(s)\leq 1+\overline\varepsilon, \; 0<s\leq{\overline s},
	\end{equation}
	where ${\overline s} = [\overline\varepsilon/(1+\overline\varepsilon)]^{1/\gamma}\omega-\varepsilon,$
	$\varepsilon<[\overline\varepsilon/(1+\overline\varepsilon)]^{1/\gamma}\omega.$
By (\ref{E8})--(\ref{Gl:enq3})  we can choose
	$\overline\varepsilon$ small and  $A$ large so that in $Q_{{\overline s},T}$
\begin{align*}
        &L\overline v \geq   b \left( \left[( s+\varepsilon)^{-\gamma}-\omega^{-\gamma}\right]_+^\frac{\beta }{\gamma} + A \right)^m - \beta(\beta+1)\left[ D(s)\right]^\frac{2(\gamma+1)}{\gamma}\left[( s+\varepsilon)^{-\gamma}-\omega^{-\gamma}\right]_+^\frac{\beta+2}{\gamma} \\
       &-  \beta \overline c \left[ D(s)\right]^\frac{\gamma+1}{\gamma} \left[( s+\varepsilon)^{-\gamma}-\omega^{-\gamma}\right]_+^\frac{\beta+1}{\gamma}
       -a c_8 (T) \left(\left[( s+\varepsilon)^{-\gamma}-\omega^{-\gamma}\right]_+^\frac{\beta}{\gamma}+A\right)^p \geq 0.
      \end{align*}
	
	Let $s\in[{\overline s},\delta].$ From (\ref{Gl:new-coord})--(\ref{Gl:enq2}) we have
	\begin{equation*}
	\vert\Delta \overline v\vert \leq \beta(\beta+1)\omega^{-(\beta+2)}\left(\frac{1+\overline\varepsilon}
	{\overline\varepsilon}\right)^\frac{\beta+2}{\gamma} + \beta\overline c\omega^{-(\beta+ 1)}\left(\frac{1+\overline\varepsilon}{\overline\varepsilon}\right)^\frac{\beta+1}{\gamma}
	\end{equation*}
	and  $L\overline v\geq0$ for large values of $A.$ Obviously, in $\overline{Q_T}\setminus Q_{\delta,T}$
	\begin{equation*}
	L\overline v = - a c_8 (T) A^p + b A^m  \geq  0
	\end{equation*}
	for large values of $A.$
	
	Now we prove the following inequality
\begin{equation}\label{E:4.6}
\frac{\partial \overline v}{\partial\nu} (\overline x,0,t) \geq
\int_{\Omega} K \overline v^l(\overline x,s,t) \, dy, \quad
(x,t) \in S_T
\end{equation}
for a suitable choice of $\varepsilon.$ To estimate the integral
$I$ in the right hand side of (\ref{E:4.6}) we use the change of
variables in a neighborhood of $\partial\Omega$ as above. Let
	\begin{equation*}
 \overline J= \sup_{0< s< \delta} \int_{\partial\Omega}
\vert J(\overline y,s)\vert \, d\overline y,
	\end{equation*}
where $J(\overline y,s)$ is Jacobian of the change of variables.
Then we have
\begin{align*}
I \leq & \theta  K \int_{\Omega} \left[ ( s +
\varepsilon)^{-\gamma} - \omega^{-\gamma}\right]_+^\frac{\beta
l}{\gamma} \, dy + \theta  K A^l \vert\Omega\vert\\
\leq & \theta  K  \overline J \int_{0}^{\omega -
\varepsilon} \left[ ( s + \varepsilon)^{-\gamma} -
\omega^{-\gamma}\right]^\frac{\beta
l}{\gamma} \, ds + \theta  K A^l \vert\Omega\vert\\
\leq & \frac{\theta  K  \overline J}{\beta l-1} \left[
\varepsilon^{-(\beta l-1)} - \omega^{-(\beta l-1)}\right] +
\theta  K A^l \vert\Omega\vert,
 \end{align*}
where $\theta = \max (2^{l-1}, 1). $   On the other hand, since
	\begin{equation*}
\frac{\partial \overline v}{\partial\nu} (\overline x,0,t) = -
\frac{\partial \overline v}{\partial s} (\overline x,0,t) =
\beta \varepsilon^{-\gamma -1} \left[ \varepsilon^{-\gamma} -
\omega^{-\gamma}\right]_+^\frac{\beta-\gamma}{\gamma},
	\end{equation*}
the inequality (\ref{E:4.6}) holds if $\varepsilon$ is small
enough. At last,
	\begin{equation*}
 v(x,0) \leq \overline v (x,0) \,\,\,  \textrm{in} \,\,\,  \Omega.
	\end{equation*}
Hence, by comparison principle for (\ref{E8}), (\ref{g}), (\ref{n}) (see Remark~\ref{Rem1}) we get
	\begin{equation*}
  v(x,t) \leq \overline v (x,t) \,\,\,  \textrm{in} \,\,\,
 \overline{Q}_T.
	\end{equation*}
\end{proof}

\begin{remark}
The authors of \cite{WY} have proved global existence of solutions of (\ref{v:u})--(\ref{v:n}) for  $p+q < m, \, l \leq 1.$
\end{remark}

\section{Blow-up in finite time}\label{blow}

To formulate finite time blow-up result we introduce
\begin{equation*}
\underline k(t) = \inf_{\Omega} \int_{\partial \Omega} k(x,y,t) \, dS_x
\end{equation*}
 and suppose that
\begin{equation}\label{E9}
\underline k(0) > 0.
\end{equation}

\begin{theorem}\label{blow-up}
Let either  $r+p>\max(m,1)$ or $l>\max(m,1)$ and (\ref{E9})  hold. Then solutions of~(\ref{v:u})--(\ref{v:n}) blow up in finite time if initial data are large enough.
\end{theorem}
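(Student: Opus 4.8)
The plan is to separate the two sufficient conditions and handle each by a different mechanism: when $p+q>\max(m,1)$ the blow-up is driven by the nonlocal interior source, and a spatially homogeneous ODE subsolution suffices; when $l>\max(m,1)$ together with (\ref{E9}) the blow-up is driven by the boundary flux, and a first-order differential inequality for $\int_\Omega u\,dx$ does the job.

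\emph{Case $p+q>\max(m,1)$.} I would first observe that any nonnegative function of $t$ alone, $\underline u(x,t)=v(t)$, is a subsolution of (\ref{v:u})--(\ref{v:n}) as soon as $v'(t)\le a|\Omega|v^{p+q}(t)-bv^m(t)$: the interior inequality (\ref{v:sup^u}), reversed, becomes exactly this ODE since $\Delta v=0$ and $\int_\Omega v^q\,dy=|\Omega|v^q$, while the boundary inequality (\ref{v:sup^g}), reversed, reads $0=\partial v/\partial\nu\le\int_\Omega k(x,y,t)v^l\,dy$, which holds because $k\ge0$. Choose $v$ to solve $v'=a|\Omega|v^{p+q}-bv^m$ with $v(0)=\min_{\overline\Omega}u_0$. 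Since $p+q>m$ and $p+q>1$, once $v(0)$ is large enough the right-hand side is positive and bounded below by $\tfrac{a}{2}|\Omega|v^{p+q}$; hence $v$ increases and blows up at some finite time $T_v\le \tfrac{2}{a|\Omega|(p+q-1)}v(0)^{-(p+q-1)}\to0$ as $v(0)\to\infty$. Because $v$ is strictly positive, the extra positivity hypothesis of Theorem~\ref{Th3} (needed only when $\min(p,q,l)<1$) is satisfied, so $u\ge v$ on the existence interval of any solution $u$; consequently $u$ cannot exist past $T_v$, and taking $u_0$ large (say $u_0\ge M_0$ in $\overline\Omega$ for a suitable $M_0$) forces blow-up in finite time.

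\emph{Case $l>\max(m,1)$ and (\ref{E9}).} Here I would work with $z(t)=\int_\Omega u(x,t)\,dx$ directly. Integrating (\ref{v:u}) over $\Omega$, using the divergence theorem and the boundary condition (\ref{v:g}), then Fubini and the nonnegativity of the interior source and of $k$, one gets
\[
z'(t)\ \ge\ \underline k(t)\int_\Omega u^l(y,t)\,dy-b\int_\Omega u^m(y,t)\,dy .
\]
Put $y(t)=\int_\Omega u^l\,dy$. Since $m<l$, Hölder's inequality gives $\int_\Omega u^m\le|\Omega|^{(l-m)/l}y^{m/l}$, which is $o(y)$ as $y\to\infty$; and Jensen's inequality (valid since $l>1$) gives $y\ge|\Omega|^{1-l}z^l$. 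By (\ref{E9}) and the continuity of $k$ there is $\tau_0>0$ with $\underline k(t)\ge\underline k(0)/2$ on $[0,\tau_0]$. Therefore, as long as $t\le\tau_0$ and $z$ stays above a threshold $z_\ast$, one obtains $z'(t)\ge\tfrac14\underline k(0)|\Omega|^{1-l}z^l(t)$. If $z(0)=\int_\Omega u_0\,dx\ge z_\ast$ then $z$ is nondecreasing and this inequality persists; since $l>1$ it forces blow-up at a time $T^\ast\le c\,z(0)^{-(l-1)}$, and for $z(0)$ large this $T^\ast$ lies below $\tau_0$, so the estimate $\underline k(t)\ge\underline k(0)/2$ used along the way is legitimate and $u$ blows up in finite time.

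The step I expect to be the main obstacle is making the second argument internally consistent, since the differential inequality $z'\ge c\,z^l$ is only available \emph{while} $z$ is above the threshold \emph{and while} $t\le\tau_0$; one must check simultaneously that large initial data (i) start $z$ above $z_\ast$, (ii) keep $z$ nondecreasing so that it remains above $z_\ast$, and (iii) push the blow-up time below $\tau_0$. All three follow from $l>1$ and $z(0)\to\infty$, but they have to be quantified together. A small auxiliary point to record is that $\underline k(t)$ is continuous — it is the infimum over the compact set $\overline\Omega$ of the continuous function $(y,t)\mapsto\int_{\partial\Omega}k(x,y,t)\,dS_x$ — which is exactly what makes $\tau_0$ available from (\ref{E9}).
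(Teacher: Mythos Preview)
Your argument is correct and follows the same two-case strategy as the paper: a spatially constant ODE subsolution $v(t)$ when $p+q>\max(m,1)$, and a differential inequality for the mass $\int_\Omega u\,dx$ driven by the boundary flux when $l>\max(m,1)$. Your handling of the second case is in fact slightly cleaner than the paper's, which splits into subcases $m>1$ and $m\le1$ (deriving $V'\ge cV^m$ in the former and a Bernoulli-type inequality in the latter), whereas you pass directly to $z'\ge c\,z^l$, valid uniformly since $l>1$.
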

\begin{proof}
Suppose at first that $r+p>\max(m,1).$ To prove the existence finite time blow-up solutions we construct a suitable subsolution.
Let $f(t)$ be the solution of the following problem
\begin{equation*}
    \begin{cases}
    f'(t) = a \vert \Omega \vert f^{p+q} - b f^m, \; t >0, \\
        f(0) = f_0.
    \end{cases}
\end{equation*}
If
$$
f_0 > \left( \frac{b}{a \vert \Omega \vert} \right)^ \frac{1}{p+q-m},
$$
then $ f(t) \geq f_0$ and there exists $t_0 < +\infty$ such that
$$
\lim_{t \to t_0}  f(t) = +\infty.
$$
It is easy to check that $f(t)$  is a subsolution of~(\ref{v:u})--(\ref{v:n}) if $u_0 (x) \geq f_0.$ By Theorem~\ref{Th3}
$u(x,t)$ blows up in finite time.

Now suppose $l>\max(m,1)$ and (\ref{E9})  holds. Then there exists $T_0 >0$ such that $\underline k(t) > 0$ for $ t \in [0, T_0].$ Denote
$$
k_0 = \min_{[0, T_0]} \underline k(t), \quad V(t) =  \int_{\Omega} u(x,t) \, dx.
$$
Integrating (\ref{v:u}) over $\Omega$ and using Green's identity  and H{$\ddot o$}lder's inequality, we obtain for $t \leq T_0$
\begin{equation}\label{E10}
\begin{split}
& \int_\Omega  u_t (x,t) \, dx =  \int_{\partial \Omega} \int_\Omega k(x,y,t) u^l(y,t) \,dy \,dS_x + a \int_\Omega u^p (y,t) \, dy \int_{\Omega} u^q (y,t) \, dy \\
&-  b \int_\Omega u^m (y,t) \, dy \geq   k_0 \int_\Omega   u^l (y,t) \, dy -  b  \int_\Omega u^m (y,t) \, dy \\
&\geq \left( \int_\Omega   u^l (y,t) \, dy \right)^\frac{m}{l}  \left[ k_0 \left( \int_\Omega u^l (y,t) \, dy \right)^\frac{l-m}{l} - b \vert\Omega\vert^\frac{l-m}{l} \right] \\
&\geq \left( \int_\Omega   u (y,t) \, dy \right)^{m}  \vert\Omega\vert^{-\frac{m(l-1)}{l}}  \left[ k_0 \left( \int_\Omega   u (y,t) \, dy \right)^{l-m} \vert\Omega\vert^{-\frac{(l-m)(l-1)}{l}}
- b \vert\Omega\vert^\frac{l-m}{l}\right]
\end{split}
\end{equation}
if
\begin{equation}\label{E11}
J (t) \equiv k_0 V^{l-m} (t) \vert\Omega\vert^{-\frac{(l-m)(l-1)}{l}} - b \vert\Omega\vert^{\frac{l-m}{l}} > 1.
\end{equation}
Then from (\ref{E10}), (\ref{E11})  we have
\begin{equation}\label{E12}
V'(t) \geq  \vert\Omega\vert^{-\frac{m(l-1)}{l}}  V^m (t)
\end{equation}
for $t \leq T_0.$ Let $m>1.$ Obviously, $V(t)$ blows up at $t \leq T_0$ subject to
\begin{equation}\label{E13}
V(0) \geq \left\{ (m-1) \vert\Omega\vert^{-\frac{m(l-1)}{l}} T_0 \right\}^{-\frac{1}{m-1}}.
\end{equation}
To provide (\ref{E11}) we assume that
\begin{equation}\label{E14}
V(0) > k_0^{-\frac{1}{l-m}} \left( b \vert\Omega\vert^{\frac{l-m}{l}} + 1 \right)^{\frac{1}{l-m}} \vert\Omega\vert^{\frac{l-1}{l}}.
\end{equation}
Easily to check (\ref{E14}) is equivalent to the inequality
 \begin{equation*}
k_0 V^{l-m} (0) \vert\Omega\vert^{-\frac{(l-m)(l-1)}{l}} - b \vert\Omega\vert^{\frac{l-m}{l}} > 1.
\end{equation*}
Hence,
 \begin{equation*}
J (0)  > 1.
\end{equation*}
Suppose there exists $t_1 \in (0, T_0]$ such that $J(t_1) = 1$ and $J(t) > 1$ for $ t \in [0, t_1).$
Since for $ t \in [0, t_1]$ (\ref{E12}) holds, we have $V(t) \geq V(0)$ and
\begin{equation*}
J (t) \geq J (0) > 1.
\end{equation*}
Therefore, if we take initial data to satisfy (\ref{E13}), (\ref{E14}) then any solution of~(\ref{v:u})--(\ref{v:n}) with  $m>1$
blows up at $t \leq T_0.$

Let $m \leq 1.$ From (\ref{E10}) and H{$\ddot o$}lder's inequality we deduce that
\begin{equation}\label{E15}
V'(t) \geq   k_0 \int_\Omega   u^l (y,t) \, dy -  b  \int_\Omega u^m (y,t) \, dy \geq k_0 \vert\Omega\vert^{1-l}  V^l (t) - b \vert\Omega\vert^{1-m}  V^m (t).
\end{equation}
Assume
\begin{equation}\label{E16}
V(0) \ge \max \left\{ 1, k_0^{-\frac{1}{l-m}} \left( b \vert\Omega\vert^{\frac{l-m}{l}} + 1 \right)^{\frac{1}{l-m}} \vert\Omega\vert^{\frac{l-1}{l}} \right\}.
\end{equation}
Then by (\ref{E12})
\begin{equation}\label{E17}
  V(t) \ge 1\,\,\,  \textrm{for} \,\,\,  t \ge 0
\end{equation}
and from (\ref{E15}), (\ref{E17}) we obtain
\begin{equation*}
V'(t) \geq k_0 \vert\Omega\vert^{1-l}  V^l (t) - b \vert\Omega\vert^{1-m} (t)  V (t).
\end{equation*}
Solving this inequality we find that $V(t)$ blows up at $t \leq T_0$ if  (\ref{E16}) holds and
\begin{equation*}
V(0) \ge \left\{ \frac{k_0 \vert\Omega\vert^{1-l}}{b \vert\Omega\vert^{1-m}} [ 1 - \exp (- b(l-1)\vert\Omega\vert^{1-m} T_0)]  \right\}^{-\frac{1}{l-1}}.
\end{equation*}
\end{proof}

\subsection*{Acknowledgements}
This work is supported by the RUDN University Strategic Academic Leadership Program and the state program of fundamental research of Belarus
(grant 1.2.03.1).

\end{document}